\title[A note on smooth forms on analytic spaces]{A note on smooth forms on analytic spaces}
\author{Mats Andersson \& H{\aa}kan Samuelsson Kalm}
\address{M. Andersson, H. Samuelsson Kalm, Department of Mathematical Sciences, Division of Algebra and Geometry, University of Gothenburg and 
Chalmers University of Technology, SE-412 96 G\"{o}teborg, Sweden}
\email{matsa@chalmers.se, hasam@chalmers.se}
\date{\today}
\newtheorem{proposition}{Proposition}[section]
\newtheorem{theorem}[proposition]{Theorem}
\newtheorem{lemma}[proposition]{Lemma}
\newtheorem{corollary}[proposition]{Corollary}
\theoremstyle{definition}
\newtheorem{example}[proposition]{Example}
\newtheorem{remark}[proposition]{Remark}
\numberwithin{equation}{section}
\DeclareMathOperator{\Hom}{\mathscr{H}\text{\kern -3pt {\calligra\Large om}}\,}
\DeclareMathOperator{\Ext}{\mathscr{E}\text{\kern -3pt {\calligra\Large xt}}\,\,}
\DeclareMathOperator{\Image}{\mathscr{I}\text{\kern -3pt {\calligra\Large m}}\,}
\DeclareMathOperator{\Ker}{\mathscr{K}\text{\kern -3pt {\calligra\Large er}}\,}
\newcommand{\C}{\mathbb{C}}
\newcommand{\debar}{\bar{\partial}}
\newcommand{\PM}{\mathscr{P} \kern -3pt \mathscr{M}}
\newcommand{\CH}{\mathscr{C} \kern -2pt \mathscr{H}}
\def\newop#1{\expandafter\def\csname #1\endcsname{\mathop{\rm #1}\nolimits}}
\begin{document}
\nocite{*}
\bibliographystyle{plain}

\begin{abstract}
We prove that any smooth mapping between reduced analytic spaces induces a natural
pullback operation on smooth differential forms.
\end{abstract}

\maketitle
\thispagestyle{empty}

\section{Introduction}
There is a natural notion of smooth differential forms on any reduced analytic space.
The dual objects are the currents. Such forms and currents 
have turned out to be useful tools, e.g., in \cite{Barlet-book, Barlet-alpha, HL},
in the analytic approach to
intersection theory \cite{ASWY, AESWY1}, and in the context of the $\debar$-equation on analytic spaces \cite{AS, RSW}.

It is desirable to be able to take the direct image of a current under a proper map $f\colon X\to Z$ between reduced analytic spaces.
By duality this amounts to take pullbacks of smooth forms. In some works, e.g., \cite{ASWY, AESWY1}, it is implicitly assumed 
that this is possible.
There is an obvious tentative definition of $f^*\phi$ for a smooth form $\phi$ on $Z$. It is however not clear that 
it gives a well-defined pullback operation, not even if $f$ and $\phi$ are holomorphic and $\phi$ has positive degree; this case is settled in  
\cite[Corollary~1.0.2]{Barlet-alpha}. 
The main problem is when $f$ is the inclusion of an analytic subvariety contained in $Z_{sing}$.
It was proved in \cite[III Corollary 2.4.11]{Barlet-book} that if $f$ is holomorphic, then the suggested definition indeed gives
 a functorial operation on smooth forms.
In this  short note we give a new proof of this fact. Moreover, we extend it to the case when $f$ is merely smooth, see Theorem~\ref{snabel} below.
Our result is implicitly claimed in \cite{BH}, see Remark~\ref{BHaltdef} below.   

\smallskip

{\bf Acknowledgment:} We are grateful to the referee for careful reading and important comments.

\section{Results}
Let $X$ be a reduced analytic space. Recall that, by definition, there is a neighborhood $U$ of any point in $X$ and an 
embedding\footnote{By an embedding we always mean a closed holomorphic embedding so that the image is an analytic subvariety.}
$i\colon U\rightarrow D$ in an open set $D\subset\C^N$ such that $U$ can be identified with its image. 
For notational convenience we will suppress $U$ and say that $i$ is a 
local embedding of $X$. A smooth $(p,q)$-form $\phi$ on $X_{reg}$ is smooth on $X$, $\phi\in\mathcal{E}^{p,q}(X)$,
if there is a smooth form $\varphi$ in $D$ such that 
\begin{equation*}
i|_{X_{reg}}^*\varphi=\phi.
\end{equation*} 
If $j\colon X\to D'$ is another local embedding, then the identity on $X$ induces a biholomorphism $i(X)\xrightarrow{\sim} j(X)$. Thus, 
again by definition, locally in $D$ and $D'$, there are holomorphic maps $g\colon D\to D'$
and $h\colon D'\to D$ such that $i=h\circ j$ and $j=g\circ i$. Since $h^*\varphi$ is smooth in $D'$ and  
\begin{equation*}
j|_{X_{reg}}^*h^*\varphi=\phi,
\end{equation*} 
it follows that the notion of smooth forms on $X$ is independent of embedding.

We will write $i^*\varphi$
for the image of $\varphi\in\mathcal{E}(D)$ in $\mathcal{E}(X)$.
Let $[i(X)]$ be the Lelong current of integration over $i(X)_{reg}$.
The kernel of $i^*$ is closed since 
\begin{equation*}
i^*\varphi=0 \quad \iff \quad \varphi\wedge [i(X)]=0.
\end{equation*}  
Thus, with the quotient topology $\mathcal{E}(X)=\mathcal{E}(D)/\text{Ker}\, i^*$ is a  Fr\'echet space.
To see that this topology is independent of the embedding, notice that it is defined by the semi-norms
\begin{equation*}
|\phi|_{X,i}:=\inf \{|\varphi|_D;\, i^*\varphi=\phi\},
\end{equation*}
where $|\cdot |_D$ are the semi-norms defining the topology on $\mathcal{E}(D)$.
Let $j$, $g$, and $h$ be as above and let $|\cdot |_{X,j}$ be the analogously defined semi-norms induced by $j$.
Since $j^*\psi=\phi$ implies that $i^*g^*\psi=\phi$ and since $g^*\colon \mathcal{E}(D')\to \mathcal{E}(D)$ 
is continuous 
we get
\begin{equation*}
|\phi|_{X,i}=\inf_{i^*\varphi=\phi} |\varphi|_D \leq  \inf_{j^*\psi=\phi} |g^*\psi|_{D}
\leq C\inf_{j^*\psi=\phi} |\psi|_{D'} = C|\phi|_{X,j}.
\end{equation*}
In the same way, $|\phi|_{X,j}\leq C'|\phi|_{X,i}$ and it follows that the semi-norms $|\phi|_{X,j}$ give the same topology
as $|\phi|_{X,i}$.

\smallskip

Let $\mathcal{E}_X^{p,q}$ be the sheaf of smooth
$(p,q)$-forms on $X$ and let $\mathcal{E}_X^r=\oplus_{p+q=r}\mathcal{E}_X^{p,q}$.
We say that a continuous map $f\colon X\to Z$ between reduced
analytic spaces is \emph{smooth} if $f^*\phi\in\mathcal{E}_X^{0}$ for any $\phi\in\mathcal{E}_Z^{0}$.
Notice that if $i\colon X\to D_X$ and $j\colon Z\to D_Z$ are local embeddings, then $f$ is the restriction to $i(X)$ of a smooth map
$D_X\to D_Z$.

\begin{theorem}\label{snabel}
Let $f\colon X\to Z$ be a smooth map between reduced analytic spaces. There is a well-defined 
map $f^*\colon \mathcal{E}^r(Z)\to\mathcal{E}^r(X)$ with the following property: If 
$\phi$ is a smooth form on $Z$, $i\colon X\rightarrow D_X$ and $\iota\colon Z\rightarrow D_Z$ are local embeddings,
$\varphi$ is a smooth form in $D_Z$ such that $\iota^*\varphi=\phi$, and $\tilde f\colon D_X\to D_Z$ is a smooth map
such that $\tilde f|_{i(X)}=f$,
then 
\begin{equation}\label{property}
f^*\phi = i^*\tilde f^*\varphi.
\end{equation} 
\end{theorem}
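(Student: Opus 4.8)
The plan is to reduce the whole statement to a single local vanishing lemma and then dispose of the remaining independences by elementary functoriality. Throughout I use the characterization established above, that $i^*\omega=0$ in $\mathcal{E}(X)$ if and only if $\omega\wedge[i(X)]=0$, i.e.\ if and only if the tangential restriction of $\omega$ to the manifold $i(X_{reg})$ vanishes. Hence an element of $\mathcal{E}(X)$ is determined by its tangential restriction to $X_{reg}$, and to make sense of \eqref{property} it suffices to control the smooth form $\tilde f^*\varphi$ along $i(X_{reg})$. The case $r=0$ I would treat separately and trivially: there $\phi$ is a function, the hypothesis that $f$ is smooth says precisely that $\phi\circ f\in\mathcal{E}^0(X)$, and since $\tilde f(i(X))\subseteq\iota(Z)$ and $\varphi=\phi$ on $\iota(Z)$, formula \eqref{property} reduces to $f^*\phi=\phi\circ f$, manifestly independent of all choices. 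So I assume $r\ge 1$.

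Next I would observe that the tangential restriction of $\tilde f^*\varphi$ to $i(X_{reg})$ equals the pullback of $\varphi$ under the composite smooth map $\gamma:=\tilde f\circ i|_{X_{reg}}\colon X_{reg}\to D_Z$, which factors through $\iota\circ f|_{X_{reg}}$; in particular $\gamma$ takes values in the subvariety $\iota(Z)$ and depends only on $f$, not on the extension $\tilde f$. This already yields independence of $\tilde f$. On the open set where $f$ maps $X_{reg}$ into $Z_{reg}$ the restriction of $\gamma^*\varphi$ is the genuine manifold pullback of $\phi$, which depends only on $\phi$; the entire difficulty is concentrated on the set where $f$ maps a piece of $X_{reg}$ into $Z_{sing}$. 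The decisive step is therefore the following \emph{Vanishing Lemma}: if $W\subseteq D_Z$ is a reduced analytic subset and $\psi$ a smooth form of positive degree on $D_Z$ whose tangential restriction to $W_{reg}$ vanishes, then $\mu^*\psi=0$ for every smooth map $\mu\colon M\to D_Z$ from a smooth manifold $M$ with $\mu(M)\subseteq W$.

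Granting the lemma, independence of $\varphi$ is immediate: if $\iota^*\varphi=\iota^*\varphi'$ then $\psi:=\varphi-\varphi'$ restricts tangentially to $0$ on $\iota(Z)_{reg}$, so applying the lemma with $W=\iota(Z)$ and $\mu=\gamma$ gives $i^*\tilde f^*\psi=0$. Independence of the embeddings $i,\iota$ is then routine: the value lies in the embedding-independent space $\mathcal{E}(X)$, and its tangential restriction to $X_{reg}$ transforms through the holomorphic transition maps $g,h$ introduced before the theorem exactly as in the verification that $\mathcal{E}(X)$ is well defined, so functoriality of pullback under $g,h$ yields the same restriction. This assembles the choices into the asserted operator $f^*$.

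It remains to prove the Vanishing Lemma, and here I expect the real work. I would argue by induction on $\dim W$. If $W$ is smooth then $\mu$ maps into a manifold on which $\psi$ restricts to $0$, giving $\mu^*\psi=0$; this is the base. For the inductive step the crucial sublemma is a \emph{propagation of the vanishing to the singular locus}: the tangential restriction of $\psi$ to $(W_{sing})_{reg}$ must also vanish. I would establish this from Whitney's condition~(a), which holds along a dense open subset $S\subseteq(W_{sing})_{reg}$: for $q\in S$ and any tangent vector $v$ to $W_{sing}$ at $q$ there are $z_k\in W_{reg}$ with $z_k\to q$ and $v_k\in T_{z_k}W_{reg}$ with $v_k\to v$, whence $\langle\psi_q,v\rangle=\lim\langle\psi_{z_k},v_k\rangle=0$ by continuity of $\psi$ and its vanishing on $W_{reg}$; extending multilinearly to frames and using density of $S$ together with continuity gives vanishing on all of $(W_{sing})_{reg}$. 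With this in hand I split $M$ into the open set $\mu^{-1}(W_{reg})$, where $\mu^*\psi=0$ since $\mu$ lands in $W_{reg}$; the interior of $\mu^{-1}(W_{sing})$, where $\mu$ lands in the lower-dimensional $W_{sing}$ and the induction hypothesis applies to $W_{sing}$ equipped with the now-propagated $\psi$; and the topological boundary of $\mu^{-1}(W_{sing})$, which is nowhere dense. Since $\mu^*\psi$ is smooth and vanishes off a nowhere dense set, it vanishes identically, and the dimension drop guarantees termination. The main obstacle is precisely this propagation step, i.e.\ controlling $\psi$ at singular points of $W$ where the tangent spaces of $W_{reg}$ degenerate, which is why Whitney's condition~(a) (equivalently, the generic local product structure of $W$ along $(W_{sing})_{reg}$) is the essential analytic input.
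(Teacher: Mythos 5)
Your proposal is correct, but it takes a genuinely different route from the paper, and the comparison is instructive. The shared part is the skeleton of your Vanishing Lemma: induction on the dimension of the analytic set, splitting the source into the open preimage of $W_{reg}$, the interior of the preimage of $W_{sing}$, and a nowhere dense boundary disposed of by continuity; this is exactly the structure of the paper's Proposition~\ref{kul}(ii). You differ in two places. First, the propagation of tangential vanishing from $W_{reg}$ to $(W_{sing})_{reg}$: you obtain it from the genericity of Whitney's condition (a) for the pair $(W_{reg},(W_{sing})_{reg})$ plus a limit argument on tangent vectors, whereas the paper proves this step (its Proposition~\ref{kul}(i), in fact for an arbitrary analytic subset $W\subset V$) via Hironaka resolution, a non-surjectivity lemma for proper holomorphic maps of low rank (Lemma~\ref{snar}), and the constant rank theorem. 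Second, your Vanishing Lemma is stated for arbitrary smooth maps $\mu\colon M\to D_Z$ with image in $W$, not only for embedded submanifolds as in Proposition~\ref{kul}(ii). This stronger statement makes the deduction of Theorem~\ref{snabel} nearly formal: since $i^*\tilde f^*\varphi$ is the pullback of $\varphi$ under $\iota\circ f|_{X_{reg}}$, independence of $i$ and $\tilde f$ is automatic, and independence of $\varphi$ and $\iota$ follows from the lemma together with the holomorphic transition maps; the paper, having only the submanifold version, must in addition stratify $X_{reg}$ by the rank of $f$ and invoke the constant rank theorem to land on smooth submanifolds of $D_Z$ contained in $Z$. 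What each approach buys: yours proves directly the Bloom--Herrera claim $\mathcal{N}(D)=\text{Ker}\, i^*$ of Remark~\ref{BHaltdef} (your Vanishing Lemma is literally that claim, which the paper obtains only a posteriori from the theorem), and it trades resolution of singularities for stratification theory; the paper's route avoids Whitney theory altogether and yields the more general Proposition~\ref{kul}(i) along the way. Two details you must nail down for completeness: a correct citation for the genericity of condition (a) along $(W_{sing})_{reg}$ (Whitney's theorem, or the existence of (a)-regular stratifications of analytic sets) --- this is the load-bearing input replacing Hironaka --- and the Grassmannian bookkeeping in the limit argument when $W$ is not pure-dimensional (pass to an irreducible component carrying a subsequence of the approximating points $z_k$), the analogue of the paper's reduction to irreducible $W$ in Proposition~\ref{kul}(i).
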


Assume that $g\colon X\to Y$ and $h\colon Y\to Z$ are smooth maps such that
$f=h\circ g$. Let $j\colon Y\rightarrow D_Y$
be a local embedding 
and 
$\tilde g\colon D_X\to D_Y$ and $\tilde h\colon D_Y\to D_Z$ smooth maps such that $\tilde g|_{i(X)}=g$ and $\tilde h|_{j(Y)}=h$, respectively. 
Notice that the restriction of $\tilde h\circ \tilde g$ to $i(X)$ is $f$.
If $\phi\in\mathcal{E}(Z)$ and $\phi=\iota^*\varphi$ it follows  by Theorem~\ref{snabel} that
$h^*\phi=j^*\tilde h^*\varphi$ and $g^*h^*\phi=i^*\tilde g^* \tilde h^*\varphi=f^*\phi$.
Hence, 
\begin{equation}\label{elsa}
f^*\phi=g^*h^*\phi, \quad \phi\in\mathcal{E}(Z).
\end{equation}

\begin{remark}\label{BHaltdef}
An a priori different definition of smooth forms on $X$ is 
given in \cite[Section~3.3]{BH}.
If $i\colon X\to D$ is a local embedding, then the space of smooth forms on $X$ is defined in  \cite[Section~3.3]{BH}
as $\mathcal{E}(D)/\mathcal{N}(D)$, where $\mathcal{N}(D)$ is the space of smooth forms $\varphi$ in $D$ such that 
for any smooth manifold $W$ and any smooth map $g\colon W\to D$ with $g(W)\subset X$ one has $g^*\varphi=0$. 

It is clear that $\mathcal{N}(D)\subset\text{Ker}\, i^*$
and it is in fact claimed in \cite[Section~3.3]{BH} that  
$\mathcal{N}(D)=\text{Ker}\, i^*$,  but we have not been able to find a proof in the literature.
It follows from Theorem~\ref{snabel} that the claim indeed is true: If $g\colon W\to D$ is a smooth map
with $g(W)\subset X$, then $g=i\circ \gamma$ for a smooth map $\gamma\colon W\to X$. In view of \eqref{elsa} thus
$g^*\varphi=\gamma^*i^*\varphi=0$ if $i^*\varphi=0$.
\end{remark}

\smallskip

The space of currents on $X$, $\mathscr{C}(X)$, is the dual of the space of test forms, i.e., compactly supported forms in $\mathcal{E}(X)$,
cf.\ \cite[Section~4.2]{HL}. Let $f\colon X\to Z$ be as in Theorem~\ref{snabel} and assume that $f$ is proper.
Then $f^*\phi$ is a test form on $X$ if $\phi$ is a test form on $Z$.
If $\mu$ is a current on $X$ thus $f_*\mu$ is a current on $Z$ defined by
\begin{equation}\label{snyting}
f_*\mu . \phi = \mu . f^*\phi.
\end{equation}

By Theorem~\ref{snabel} and \eqref{elsa} we get

\begin{corollary}
Let $f\colon X\to Z$ be a smooth proper map between reduced analytic spaces. Then the induced mapping
$f_*\colon\mathscr{C}(X)\to\mathscr{C}(Z)$
has the property that if $f=h\circ g$, where $g\colon X\to Y$ and $h\colon Y\to Z$ are smooth proper maps, then
\begin{equation*}
f_*\mu = h_*g_*\mu, \quad \mu\in\mathscr{C}(X). 
\end{equation*} 
\end{corollary}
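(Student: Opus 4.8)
The plan is to derive the functoriality of $f_*$ by dualizing the functoriality of pullback already recorded in \eqref{elsa}. Since a current on $Z$ is determined by its action on test forms, it suffices to check that $f_*\mu$ and $h_*g_*\mu$ agree as functionals, that is, that $(f_*\mu).\phi = (h_*g_*\mu).\phi$ for every test form $\phi$ on $Z$.

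Before pairing I would confirm that every pushforward in sight is well-defined. Because $g$, $h$, and $f=h\circ g$ are proper, the pullback of a compactly supported form stays compactly supported: if $\phi$ is a test form on $Z$ then $\mathrm{supp}\, h^*\phi\subset h^{-1}(\mathrm{supp}\,\phi)$ is compact, so $h^*\phi$ is a test form on $Y$, and likewise $g^*h^*\phi$ is a test form on $X$. This is exactly the remark preceding the corollary, and it guarantees that $g_*\mu$, $h_*g_*\mu$, and $f_*\mu$ are all genuine currents and that the pairings below are defined.

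The computation is then immediate. Applying the defining relation \eqref{snyting} repeatedly,
\begin{equation*}
(h_*g_*\mu).\phi = (g_*\mu).(h^*\phi) = \mu.(g^*h^*\phi),
\end{equation*}
and by \eqref{elsa} we have $g^*h^*\phi = f^*\phi$, so the last expression equals $\mu.(f^*\phi) = (f_*\mu).\phi$ by one more use of \eqref{snyting}. As $\phi$ ranges over all test forms on $Z$, this gives $f_*\mu = h_*g_*\mu$.

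I do not expect a genuine obstacle here: granting Theorem~\ref{snabel} and its consequence \eqref{elsa}, the statement is a formal duality. The only point demanding attention is the bookkeeping of supports, namely ensuring that properness of each of the three maps keeps us inside the space of test forms at every stage so that all the pairings make sense; once that is in place the chain of equalities closes with no further input.
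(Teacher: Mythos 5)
Your proof is correct and is exactly the argument the paper intends: the paper simply states ``By Theorem~\ref{snabel} and \eqref{elsa} we get'' the corollary, leaving implicit the formal dualization via \eqref{snyting} and the support bookkeeping that you spell out. No difference in approach, only in level of detail.
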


\begin{example}
Suppose that $i\colon X\to D$ is an embedding 
and consider the induced mapping $i_*\colon\mathscr{C}(X)\to\mathscr{C}(D)$.
It follows from \eqref{snyting} and the definition of 
test forms on $X$ that $i_*$ is injective. Thus $\mathscr{C}(X)$ can be identified with its image $i_*\mathscr{C}(X)$.
In view of the definition of $\mathscr{C}(X)$ and \eqref{snyting} it follows that $i_*\mathscr{C}(X)$ is the set of currents
$\mu$ in $D$ such that $\mu . \varphi=0$ if $i^*\varphi=0$. 
Notice in particular that $i_* 1=[i(X)]$.
\end{example}

\section{Proofs}
We will prove Theorem~\ref{snabel} by showing 
that the right-hand side of \eqref{property} is independent of the choices of embeddings $i$, $\iota$ and extensions $\tilde f$ and $\varphi$
of $f$ and $\phi$, respectively.
The technical part is contained in Proposition~\ref{kul}, cf.\ \cite[Proposition~III 2.4.10]{Barlet-book} and 
\cite[Proposition~1.0.1]{Barlet-alpha}. 
We begin with the following
lemma.

\begin{lemma}\label{snar}
Let $M$ be a reduced analytic space, $N$ a complex manifold, and 
$p\colon M\to N$ a proper holomorphic map. If $\text{dim}\, N=d\geq 1$ and 
$\text{rank}_x\, p <d$ for all $x\in M_{reg}$, then $p$ is not surjective. 
\end{lemma}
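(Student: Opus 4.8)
The plan is to show that the image $p(M)$ is a proper analytic subvariety of $N$, which is clearly incompatible with surjectivity since $\text{dim}\, N=d\geq 1$. The only delicate point is the contribution of the singular locus $M_{sing}$, on which no rank hypothesis is imposed; the guiding idea is that $M_{reg}$ is dense and therefore already controls the dimension of the entire image.

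Concretely, I would argue by contradiction and assume that $p$ is surjective. Since it suffices to treat each connected component of $N$ separately, I may assume that $N$ is connected, so that $p(M)=N$. Write $M=\bigcup_k M_k$ as the (at most countable) union of its irreducible components. By Remmert's proper mapping theorem each restriction $p|_{M_k}$, being proper, has analytic, hence closed, image $p(M_k)\subset N$. As $N=\bigcup_k p(M_k)$ is a countable union of closed analytic subsets of the connected manifold $N$, the Baire category theorem forces one of them, say $p(M_{k_0})$, to have nonempty interior; but an analytic subset of $N$ with an interior point is all of $N$. Thus $p|_{M_{k_0}}\colon M_{k_0}\to N$ is surjective, in particular dominant.

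Next I would invoke the standard relation between the generic rank of a holomorphic map and the dimension of its image: for the irreducible space $M_{k_0}$ one has $\text{dim}\, p(M_{k_0})=\max_{x\in (M_{k_0})_{reg}}\text{rank}_x\,(p|_{M_{k_0}})$. Since $p|_{M_{k_0}}$ is dominant the left-hand side equals $\text{dim}\, N=d$, so there is a point $x\in (M_{k_0})_{reg}$ at which $\text{rank}_x\,(p|_{M_{k_0}})=d$. The locus where this maximal rank is attained is dense in $(M_{k_0})_{reg}$, while the set of points of $(M_{k_0})_{reg}$ that also lie on some other component of $M$ is a proper, hence lower-dimensional, analytic subset; I can therefore choose such an $x$ lying in $M_{reg}$. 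At that point the germ of $M$ coincides with that of $M_{k_0}$, so $\text{rank}_x\,p=\text{rank}_x\,(p|_{M_{k_0}})=d$, contradicting the hypothesis $\text{rank}_x\,p<d$ on $M_{reg}$. Hence $p$ cannot be surjective.

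The main obstacle is exactly the step bounding the image dimension by the rank on the regular part: the hypothesis only constrains $\text{rank}_x\, p$ for $x\in M_{reg}$, and one must rule out that $p(M_{sing})$ fills up $N$. A naive induction on $\text{dim}\, M$ that merely applies Sard's theorem on $M_{reg}$ does not close, because the rank condition need not propagate to the regular part of $M_{sing}$. It is the global identity $\text{dim}\, p(M_{k_0})=\max\text{rank}$ on the regular part, together with the density of $M_{reg}$ and Remmert's theorem, that resolves this; Sard's theorem still enters, but only implicitly, as it underlies the generic-rank--image-dimension relation.
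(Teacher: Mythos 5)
Your proof is correct, but it takes a genuinely different route from the paper's. The paper reduces to the smooth case: it takes a Hironaka resolution $\pi\colon\widetilde M\to M$, notes that $\tilde p:=p\circ\pi$ is proper with the same image, that the rank bound holds on $\widetilde M\setminus E$ (where $\pi$ is biholomorphic onto $M_{reg}$), hence on all of $\widetilde M$ by lower semicontinuity of the rank, and then invokes the constant rank theorem on the smooth space $\widetilde M$. You avoid resolution of singularities entirely: Remmert's proper mapping theorem plus a Baire argument produce one irreducible component $M_{k_0}$ with $p(M_{k_0})=N$; the classical identity $\dim p(M_{k_0})=\max_{x\in (M_{k_0})_{reg}}\text{rank}_x\,(p|_{M_{k_0}})$ (Remmert's rank theorem, valid since the image is analytic by properness) gives a regular point of $M_{k_0}$ of rank $d$; and your density argument---which implicitly uses that $(M_{k_0})_{reg}$ is connected and that the intersection with the other components is a proper analytic subset---moves that point into $M_{reg}$, where the germ of $M$ equals that of $M_{k_0}$, giving the contradiction. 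Two trade-offs are worth noting. First, the weight of your argument is carried by the cited rank theorem, which is essentially the irreducible case of the lemma itself; it is a genuine classical result, so the citation is legitimate, but your own contribution is then mainly the (necessary, and correctly executed) reduction to a single component and the placement of a maximal-rank point inside $M_{reg}$, whereas the paper is self-contained modulo Hironaka and the constant rank theorem. Second, your Baire step tacitly assumes at most countably many irreducible components (second countability of $M$); this can be avoided: properness of $p$ makes the family $\{p(M_k)\}_k$ locally finite in $N$, and a locally finite union of proper analytic subsets of the connected manifold $N$ has empty interior, so some $p(M_{k_0})$ must equal $N$ with no countability hypothesis.
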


\begin{proof}
If $M$ is smooth it follows from the constant rank theorem that $p$ cannot be surjective. 
If $M$ is not smooth, let $\pi\colon\widetilde M\to M$ be a Hironaka resolution of singularities.
Then $\widetilde M$ is smooth and $\tilde p:=p\circ \pi$ is a proper holomorphic map with the same image as $p$.
Since $\pi$ is a biholomorphism outside the exceptional divisor $E=\pi^{-1}(M_{sing})$ we have
$\text{rank}_x\, \tilde p < d$ for all $x\in \widetilde M\setminus E$.
By semi-continuity of the rank it follows that
$\text{rank}_x\, \tilde p < d$ for all $x\in \widetilde M$. By the constant rank theorem thus $\tilde p$ cannot be surjective.

\end{proof}

\begin{proposition}\label{kul}
Let $D\subset\C^N$ be an open set and let $\varphi$ be a smooth form in $D$. 
\begin{itemize}
\item[(i)] Let $W\subset V$ be analytic subsets of $D$.
If the pullback of $\varphi$ to $V_{reg}$ vanishes, then the pullback of $\varphi$ to $W_{reg}$ vanishes.
\item[(ii)] Let $W$ be a smooth not necessarily complex submanifold of $D$, let $V\subset D$ be an analytic subset, and assume that $W\subset V$.
If the pullback of $\varphi$ to $V_{reg}$ vanishes, then the pullback of $\varphi$ to $W$ vanishes.
\end{itemize}
\end{proposition}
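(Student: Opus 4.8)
The plan is to establish (i) first and then deduce (ii) from (i) by induction on $\dim V$. For a submanifold $S\subseteq D$ I write $\varphi|_S$ for the pullback of $\varphi$ to $S$; concretely, $\varphi|_S$ vanishes at $x$ precisely when $\varphi_x$ restricted to $T_xS$ is zero. The only genuine difficulty occurs when the submanifold in question lies inside $V_{sing}$, so that the hypothesis $\varphi|_{V_{reg}}=0$ cannot be restricted to it directly.

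To prove (i) I would first transport the hypothesis to a smooth space. Let $\pi\colon\widetilde V\to V$ be a resolution of singularities and $\widetilde\pi\colon\widetilde V\to D$ its composition with the inclusion $V\hookrightarrow D$. Since $\pi$ is biholomorphic over the dense open set $V_{reg}$, the smooth form $\widetilde\pi^*\varphi$ vanishes on the dense subset $\pi^{-1}(V_{reg})$, hence on all of $\widetilde V$. It suffices to argue componentwise, so fix an irreducible component $W'$ of $W$ of dimension $k$; the case $k=0$ is immediate from continuity, as $W_{reg}\subseteq V=\overline{V_{reg}}$. As $\pi\colon\pi^{-1}(W')\to W'$ is proper and surjective and $W'$ is irreducible, there is an irreducible component $M$ of $\pi^{-1}(W')$ with $\pi(M)=W'$; let $\mu\colon\widehat W\to M$ be a resolution, put $q=\pi\circ\mu\colon\widehat W\to W'$, and let $\Phi\colon\widehat W\to D$ be the induced map. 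Then $\Phi^*\varphi=\mu^*\widetilde\pi^*\varphi=0$.

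The key step, and the place where Lemma~\ref{snar} is used, is to show that the differential of $\Phi$ attains the full tangent space of $W_{reg}$ somewhere over each small ball. Let $N\subseteq W'_{reg}$ be a coordinate ball of dimension $k\ge1$; then $q\colon q^{-1}(N)\to N$ is proper and surjective, and $q^{-1}(N)$ is an open, hence smooth, subset of $\widehat W$. Were $\text{rank}_y\,q<k$ for all $y\in q^{-1}(N)$, Lemma~\ref{snar} would make $q$ nonsurjective onto $N$, a contradiction. Hence there is $y_0$ with $dq_{y_0}$ surjective onto $T_{x_1}W_{reg}$, where $x_1:=q(y_0)\in N$, so $d\Phi_{y_0}$ has image $T_{x_1}W_{reg}$; since $\Phi^*\varphi=0$ at $y_0$, this forces $\varphi|_{W_{reg}}$ to vanish at $x_1$. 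As $N$ can be taken arbitrarily small around any point, the zero set of the smooth form $\varphi|_{W_{reg}}$ is dense, and therefore $\varphi|_{W_{reg}}\equiv0$. I expect this full-rank step to be the main obstacle: the earlier reductions are formal, whereas passing from surjectivity of $q$ to surjectivity of some $dq_{y_0}$ is exactly the content one must extract, and Lemma~\ref{snar} delivers it.

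Finally, (ii) follows from (i) by induction on $d=\dim V$, the case $d=0$ being trivial. The pullback $\varphi|_W$ is a smooth form on $W$. On the open subset $W\cap V_{reg}$ one has $T_xW\subseteq T_xV_{reg}$, so the hypothesis gives $\varphi|_W=0$ there, and by continuity $\varphi|_W=0$ on $A:=\overline{W\cap V_{reg}}$, the closure taken in $W$. The complement $W_0:=W\setminus A$ is an open, hence smooth, submanifold contained in $V_{sing}$, and $\dim V_{sing}<d$. By part (i) applied to $V_{sing}\subseteq V$ the pullback of $\varphi$ to $(V_{sing})_{reg}$ vanishes, so the inductive hypothesis applies to $W_0\subseteq V_{sing}$ and yields $\varphi|_W=0$ on $W_0$. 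Since $A\cup W_0=W$, this completes the proof. The point specific to (ii) is that $W$ is merely a real submanifold, for which limits-of-tangent-planes arguments are awkward; the induction avoids them by splitting off the part of $W$ meeting $V_{reg}$ and pushing the remainder into the lower-dimensional $V_{sing}$, where (i) furnishes the hypothesis needed to recurse.
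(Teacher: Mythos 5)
Your proof is correct, and it rests on the same two pillars as the paper's: Hironaka resolution together with Lemma~\ref{snar} for part (i), and induction on $\dim V$ combined with part (i) for part (ii). But the execution is genuinely different in both parts. In (i) the paper produces a smooth space dominating $W$ by hand inside the resolution of $V$: it sets $W'=\tilde\pi^{-1}(W)$, iteratively replaces $W'$ by $W'_{sing}$ until $\tilde\pi(W'_{sing})\nsubseteq W$, removes the preimage of $W_{sing}\cup\tilde\pi(W'_{sing})$ to obtain a smooth $M$ proper and surjective over a dense open $N\subset W_{reg}$, and then applies Lemma~\ref{snar} just once, to the rank-deficient locus $\widetilde M\subset M$, so that over $N\setminus p(\widetilde M)$ the map has constant maximal rank, is locally a projection, and pullback injectivity is immediate. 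You instead pick an irreducible component of the preimage of $W'$ that dominates $W'$ (the Baire/Remmert argument behind this existence is standard but deserves a sentence), resolve it a second time to get a smooth $\widehat W$, and apply Lemma~\ref{snar} in contrapositive over every small coordinate ball of $W'_{reg}$, producing one full-rank point per ball and hence a dense zero set of $\varphi|_{W'_{reg}}$; continuity finishes. Your version costs an extra resolution but avoids the iterated-singular-locus construction and the constant-rank/local-projection step, replacing them by pointwise linear algebra. In (ii) you reorganize the paper's pointwise trichotomy (point in $V_{reg}$; point with a $W$-neighborhood inside $V_{sing}$; point that is a limit of points of $W\cap V_{reg}$) into the global decomposition $W=\overline{W\cap V_{reg}}\cup W_0$ with $W_0\subset V_{sing}$ open in $W$; this also replaces the paper's limit-of-tangent-planes argument by the cleaner remark that the smooth form $\varphi|_W$ on the manifold $W$ vanishes on the closure of any open subset where it vanishes. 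Both routes are sound; yours is somewhat more modular, the paper's more economical with resolutions.
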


\begin{proof}[Proof of part (i)]
We may assume that $W$ is irreducible of dimension $d$.
We may also assume that $\varphi$ has positive degree since a smooth function vanishing on $V_{reg}$
must vanish on $W$ by continuity.
The case $d=0$ is then clear since the pullback of a form of positive degree to discrete points necessarily vanishes.
Let $\tilde\pi\colon V'\to V$ be a Hironaka resolution of singularities. 
Suppose that 
$W'\subset V'$ is analytic and such that $\tilde\pi(W')=W$. Let $\pi=\tilde\pi|_{W'}$ and
let $\phi$ be the pullback of $\varphi$ to $W_{reg}$. Since the pullback of $\varphi$ under 
$W'\hookrightarrow V'\to V\hookrightarrow D$ is $0$, it follows that $\pi^*\phi=0$.
We will find such $W'$ and $\pi$ such that $\pi^*\phi=0$ implies $\phi=0$.

To begin with we set $W'=\tilde\pi^{-1}(W)$. If $\tilde\pi(W'_{sing})=W$, replace $W'$ by $W'_{sing}$.
Possibly repeating this we may assume that $\tilde\pi(W'_{sing})\nsubseteq W$. Thus 
$\tilde\pi(W'_{sing})$ is a proper analytic subset of $W$. Set $\pi=\tilde\pi|_{W'}$
and notice that $\pi\colon W'\to W$ is proper and surjective.

Let
\begin{equation*}
M=W'\setminus \pi^{-1}(W_{sing}\cup \pi(W'_{sing})), \quad N=W_{reg}\setminus \pi(W'_{sing}),
\end{equation*}
and let $p=\pi|_M$. Since $M$ is smooth and $p\colon M\to N$ is proper and surjective it follows from 
the constant rank theorem that there is $x\in M$ such that 
$\text{rank}_x\,p=d$. Since $d$ is the optimal rank of $p$ this holds for $x$ in a non-empty Zariski-open subset of $M$.
Let $\widetilde M=\{x\in M; \text{rank}_x\,p\leq d-1\}$ be the complement of this set. Then $\text{rank}_x\,p|_{\widetilde M}\leq d-1$
for all $x\in\widetilde M_{reg}$. By Lemma~\ref{snar}, $p(\widetilde M)\nsubseteq N$ and thus $p(\widetilde M)$ is 
a proper analytic subset of $N$. 

Now, $N\setminus p(\widetilde M)$ is a dense open subset of $W_{reg}$ and so it suffices to show that $\phi=0$ there.
However, $M\setminus p^{-1}p(\widetilde M)$ is a (non-empty) open subset of $M$ and in this set $p$ has constant rank 
$=d=\text{dim}\, W$. Thus, $p$ is locally a simple projection and it follows that if $p^*\phi=0$, then $\phi=0$.  

\smallskip

\noindent \emph{Proof of part (ii).}
We use induction over $\text{dim}\, V$. The case $\text{dim}\, V=0$ is clear so suppose that $\text{dim}\, V>0$. 

Take a point $w\in W$. If $w\in V_{reg}$, then there is  a neighborhood $U\subset W$ of $w$  contained in $V_{reg}$. 
Then clearly the pullback of $\varphi$ to $U$ vanishes.
Assume now that $w\in V_{sing}$. If there is a neighborhood $U\subset W$ of $w$ contained in $V_{sing}$, 
then the pullback of $\varphi$ to $U$ vanishes in view of 
the induction hypothesis and part (i) of this proposition. 
If not, then there is a sequence of points $w_j\in W$ converging to $w$ such that $w_j\in V_{reg}$. Then there are neighborhoods 
$U_j\subset W$ of 
$w_j$ contained in $V_{reg}$. The pullback of $\varphi$ to $U_j$ vanish. Now, $\varphi(w)$ is a multilinear mapping on 
$T_wD$ depending continuously
on $w$. Since the pullback of $\varphi$ to $U_j$ vanish the restriction of $\varphi(w_j)$ to $T_{w_j}W$ vanish.
By continuity thus the restriction of $\varphi(w)$ to $T_wW$ vanishes.

Hence, for any $w\in W$, the restriction of $\varphi(w)$ to $T_wW$ vanishes; thus the pullback of $\varphi$ to $W$ vanishes.
\end{proof}

\begin{proof}[Proof of Theorem~\ref{snabel}]
Let $\phi\in\mathcal{E}(Z)$ and let $f^*\phi$ be the form on $X_{reg}$ defined by the right-hand side of \eqref{property}. 
Clearly $f^*\phi$ is smooth on $X$. As mentioned, we will show that it is independent of the choices of extensions $\tilde f$ and $\varphi$ as well
as of the local embeddings. 

First assume that $X$ is smooth. The set $X_1\subset X$ of points where $f$ has maximal rank is open.
By the constant rank theorem
each point $x\in X_1$ has a neighborhood
$U_x$ such that $f|_{U_x}$ is a submersion onto a smooth submanifold $f(U_x)$ of $D_Z$ contained in $Z$.
By  Proposition~\ref{kul} (ii), if $\varphi$ is a smooth form in $D_Z$ such that $\iota^*\varphi=\phi$, then
the pullback of $\varphi$ to $f(U_x)$ only depends on the pullback of $\varphi$ to $Z_{reg}$, i.e., only on $\phi$.
Thus, $f^*\phi$ is well-defined in $U_x$.
Hence, $f^*\phi$ is well-defined in $X_1$ and so, by continuity, well-defined in the closure $\overline X_1$.
Repeating the argument with $X$ and $f$ replaced by $X\setminus \overline X_1$ and 
$f|_{X\setminus \overline X_1}$ it follows that $f^*\phi$ is well-defined in $\overline X_2$,
where $X_2\subset X\setminus \overline X_1$ is the set of points where $f|_{X\setminus \overline X_1}$ has maximal rank.
Notice that this rank is strictly less than the rank of $f$ in $X_1$. Thus, continuing  the process of constructing such open sets $X_k$, 
after a finite number of steps we get $X_k=\emptyset$. Since $X=\cup_j \overline X_j$, $f^*\phi$ is well-defined in $X$.

In the case of a general $X$ we restrict $f$ to $X_{reg}$ and conclude that 
$f^*\phi$ is well-defined on $X_{reg}$, which by definition means that $f^*\phi$ is well-defined.

\end{proof}

\end{document}